\theoremstyle{plain}
\newtheorem{thm}{\protect\theoremname}
  \theoremstyle{definition}
  \newtheorem{defn}[thm]{\protect\definitionname}
  \theoremstyle{remark}
  \newtheorem{rem}[thm]{\protect\remarkname}
  \theoremstyle{plain}
  \newtheorem{cor}[thm]{\protect\corollaryname}
  \theoremstyle{plain}
  \newtheorem{lem}[thm]{\protect\lemmaname}
  \theoremstyle{plain}
  \newtheorem{prop}[thm]{\protect\propositionname}
\setlist{leftmargin=*}
  \providecommand{\corollaryname}{Corollary}
  \providecommand{\definitionname}{Definition}
  \providecommand{\lemmaname}{Lemma}
  \providecommand{\propositionname}{Proposition}
  \providecommand{\remarkname}{Remark}
\providecommand{\theoremname}{Theorem}
\begin{document}
\global\long\def\R{\mathbb{R}}

\global\long\def\N{\mathbb{N}}

\global\long\def\C{\mathbb{C}}

\global\long\def\Cl{C}

\global\long\def\tr{\mathrm{tr}}

\global\long\def\z{\theta}

\global\long\def\Se{\mathcal{S}}

\global\long\def\P{L_{\beta_{1},\beta_{2}}^{\infty}(\Omega)}

\global\long\def\V{H_{0}^{1}(\Omega;\C)}

\global\long\def\Vp{H^{-1}(\Omega;\C)}

\global\long\def\Vr{H_{0}^{1}(\Omega)}

\global\long\def\Honer{H^{1}(\Omega)}

\global\long\def\Hone{H^{1}(\Omega;\C)}

\global\long\def\Hhalf{H^{1/2}(\Omega;\C)}

\global\long\def\Hhalfr{H^{1/2}(\Omega)}

\global\long\def\phi{\varphi}

\global\long\def\epsilon{\varepsilon}

\global\long\def\div{{\rm div}}

\global\long\def\ld{L^{2}(\Omega;\C^{3})}

\global\long\def\ldr{L^{2}(\Omega;\R^{3})}

\global\long\def\linf{L^{\infty}(\Omega;\C)}

\global\long\def\linfr{L^{\infty}(\Omega;\R)}

\global\long\def\Conealp{\mathcal{C}^{1,\alpha}(\overline{\Omega};\C)}

\global\long\def\Conealpr{\mathcal{C}^{1,\alpha}(\overline{\Omega})}

\global\long\def\Calpr{\mathcal{C}^{0,\alpha}(\overline{\Omega};\R^{d\times d})}

\global\long\def\Calprsca{\mathcal{C}^{0,\alpha}(\overline{\Omega})}

\global\long\def\Czeroonescal{\mathcal{C}^{0,1}(\overline{\Omega})}

\global\long\def\Calpvect{\mathcal{C}^{0,\alpha}(\overline{\Omega};\R^{d})}

\global\long\def\Cone{\Cl^{1}(\overline{\Omega};\C)}

\global\long\def\Coner{\Cl^{1}(\overline{\Omega})}

\global\long\def\Kad{K_{ad}}

\global\long\def\mina{\lambda}

\global\long\def\maxa{\Lambda}

\global\long\def\Hcurl{H(\curl,\Omega)}

\global\long\def\Hmu{H^{\mu}(\curl,\Omega)}

\global\long\def\Hocurl{H_{0}(\curl,\Omega)}

\global\long\def\Hdiv{H(\div,\Omega)}

\global\long\def\k{\omega}

\global\long\def\div{{\rm div}}

\global\long\def\curl{{\rm curl}}

\global\long\def\supp{{\rm supp}}

\global\long\def\sp{{\rm span}}

\global\long\def\ii{\mathbf{i}}

\global\long\def\E{E_{\k}}

\global\long\def\Ei{E_{\k}^{i}}

\global\long\def\Hi{H_{\k}^{i}}

\global\long\def\H{H_{\k}}

\global\long\def\EE{\tilde{E}_{\k}}

\global\long\def\bo{\partial\Omega}

\global\long\def\Co{\Cl(\overline{\Omega};\C^{3})}

\global\long\def\ve{\theta}

\global\long\def\so{\hat{\sigma}}

\global\long\def\order{\kappa}

\global\long\def\p{C}

\global\long\def\pert{t}

\global\long\def\e{\mathbf{e}}

\global\long\def\epsi{\varepsilon}

\global\long\def\A{\mathcal{A}}

\title{On local non-zero constraints in PDE with analytic coefficients}
\author{Giovanni S. Alberti}
\address{D\'epartement  Math\'ematiques et Applications,  \'Ecole Normale Sup\'erieure, 45 rue d'Ulm, 75005 Paris, France.} 
\email{giovanni.alberti@ens.fr}
\author{Yves Capdeboscq}
\address{Mathematical Institute, University of Oxford, Oxford OX2 6GG, UK.} 
\email{yves.capdeboscq@maths.ox.ac.uk}
\date{7\textsuperscript{th} January 2015}

\thanks{The authors have benefited from the support of the EPSRC Science \& Innovation Award to the Oxford Centre for Nonlinear PDE (EP/EO35027/1).} 

\subjclass[2010]{35B30, 35R30, 35B38}

\begin{abstract}
We consider the Helmholtz equation with real analytic coefficients
on a bounded domain $\Omega\subset\R^{d}$. We take $d+1$ prescribed
boundary conditions $f^{i}$ and frequencies $\omega$ in a fixed
interval $[a,b]$. We consider a constraint on the solutions $u_{\omega}^{i}$
of the form $\zeta(u_{\omega}^{1},\ldots,u_{\omega}^{d+1},\nabla u_{\omega}^{1},\ldots,\nabla u_{\omega}^{d+1})\neq0$,
where $\zeta$ is analytic, which is satisfied in $\Omega$ when $\omega=0$.
We show that for any $\Omega^{\prime}\Subset\Omega$ and almost any
$d+1$ frequencies $\omega_{k}$ in $[a,b]$, there exist $d+1$ subdomains
$\Omega_{k}$ such that $\Omega^{\prime}\subset\cup_{k}\Omega_{k}$
and $\zeta(u_{\omega_{k}}^{1},\ldots,u_{\omega_{k}}^{d+1},\nabla u_{\omega_{k}}^{1},\ldots,\nabla u_{\omega_{k}}^{d+1})\neq0$
in $\Omega_{k}$. This question comes from hybrid imaging inverse
problems. The method used is not specific to the Helmholtz model and
can be applied to other frequency dependent problems.
\end{abstract}
\maketitle
\section{Introduction}

The motivation for this work comes from hybrid, or multi-physics,
parameter identification problem in boundary value problems for partial
differential equations \cite{bal2012_review,alberti-capdeboscq-2014}.
In such imaging modalities, one part of the inverse problem can be
described in general terms as follows. Suppose that $u_{\omega}^{1},u_{\omega}^{2},\ldots,u_{\omega}^{N}$
are the solutions of a partial differential equation of the form
\[
\left\{ \begin{array}{l}
P(x,\omega,u_{\omega}^{i})=0\quad\text{in \ensuremath{\Omega},}\\
u_{\omega}^{i}=f^{i}\quad\text{on \ensuremath{\partial\Omega},}
\end{array}\right.
\]
for $i=1,\ldots,N.$ Suppose further that the parameter $\omega$
is known, $f^{i}$ is known, and some pointwise information is known
about a functional of the solutions, e.g. $H(u_{\omega}^{1},\ldots,u_{\omega}^{N},\nabla u_{\omega}^{1},\ldots,\nabla u_{\omega}^{N})$,
in $\Omega^{\prime}$, a subdomain of $\Omega$, or possibly in all
of $\Omega$. The problem is to reconstruct spatial dependence of
the operator $P$ from this information. In this article we focus
on a particular model, a problem of Helmholtz type, in a smooth bounded
domain $\Omega\subseteq\R^{d}$, $d\ge2$, given by
\begin{equation}
\left\{ \begin{array}{l}
-\div(a\,\nabla u)-(\omega^{2}\epsi+\ii\omega\sigma)\, u=0\qquad\text{in \ensuremath{\Omega},}\\
u=f\qquad\text{on \ensuremath{\partial\Omega}.}
\end{array}\right.\label{eq:combined-i-helmholtz-1}
\end{equation}
We assume that $a\in L^{\infty}\left(\Omega;\R^{d\times d}\right)$
and that $a$ is symmetric and uniformly positive definite and bounded,
that is, for all $\xi\in\mathbb{R}^{d}$ there holds 
\begin{equation}
\lambda^{-1}\left|\xi\right|^{2}\le\xi\cdot a\xi\le\lambda\left|\xi\right|^{2}\quad\text{ a.e. in }\Omega\label{eq:ellipticity_a-multi}
\end{equation}
for some positive constant $\lambda$, whereas $\epsi,\sigma\in\linfr$
satisfy
\begin{equation}
\lambda^{-1}\le\epsi\le\lambda,\quad0\le\sigma\le\lambda\quad\mbox{\text{ a.e. in }\ensuremath{\Omega}}.\label{eq:bounds_epsilon_multi}
\end{equation}

Assumptions \eqref{eq:ellipticity_a-multi} and \eqref{eq:bounds_epsilon_multi}
guarantee that problem \eqref{eq:combined-i-helmholtz-1} has a unique
solution in $H^{1}(\Omega;\C)$ for every $f\in H^{\frac{1}{2}}(\partial\Omega;\R)$
and $\k\in D=\C\setminus\Sigma$, where $\Sigma$ denotes the set
of the discrete Dirichlet eigenvalues of the problem. Let $\A=[A_{\min},A_{\max}]$
represent the frequencies we have access to, for some $0<A_{\min}<A_{\max}$.
For simplicity, we suppose that $\A\subseteq D$. 

Given several boundary conditions $f^{i}\in H^{\frac{1}{2}}(\partial\Omega;\R)$
and frequencies $\omega_{k}\in\A$, the pointwise information available
(or observable) in this case could be
\[
u_{\omega_{k}}^{i},\, a\nabla u_{\omega_{k}}^{i},\mbox{ or }qu_{\omega_{k}}^{i}u_{\omega_{k}}^{j}\mbox{ or }a\nabla u_{\omega_{k}}^{i}\cdot\nabla u_{\omega_{k}}^{j},
\]
for some $i,j$ and $k$. As all these data come from measurements,
it is important to know a priori that the numbers obtained are not
mostly measurement error or background noise: we want to ensure that
the modulus of these data is non-zero. For instance, given $\omega$
and a boundary condition $f^{1}$, we want to ensure that 
\begin{equation}
u_{\omega}^{1}\neq0.\label{eq:const-F1}
\end{equation}
Alternatively, combining multiple data and constraints into one functional,
given $\omega$ and $d+1$ boundary conditions $f^{1},\ldots,f^{d+1}$
we write 
\begin{equation}
F(\omega,f^{1},\ldots,f^{d+1})=u_{\omega}^{1}\det\left[\begin{array}{ccc}
u_{\omega}^{1} & \ldots & u_{\omega}^{d+1}\\
\\
\nabla u_{\omega}^{1} & \ldots & \nabla u_{\omega}^{d+1}
\end{array}\right],\label{eq:def-F}
\end{equation}
and we want to ensure that 
\begin{equation}
F(\omega,f^{1},\ldots,f^{d+1})\neq0\quad\text{in }\Omega'.\label{eq:const-F}
\end{equation}
The constraints \eqref{eq:const-F1} and \eqref{eq:const-F}, or related
quantities, appear in \cite{cap2011,ammari2012quantitative,bal2011reconstruction,triki2010,alberti-capdeboscq-2014}.
If $\omega$ is large, that is, greater than a constant depending
on $\lambda$ and $\Omega$ only, \eqref{eq:const-F1} (and a fortiori
\eqref{eq:const-F}) cannot be satisfied in the whole domain, as for
any boundary condition in $H^{\frac{1}{2}}(\partial\Omega;\R)$, the
field $u_{\omega}^{1}$ must cancel (at least when $\sigma=0$). Thus
multiple boundary conditions or frequencies must be considered. 
\begin{defn}
Given a finite set of frequencies $\left\{ \omega_{1},\ldots,\omega_{K}\right\} \in\A^{K}$
and a finite set of boundary conditions $f^{1},\dots,f^{N}\in H^{\frac{1}{2}}(\partial\Omega;\mathbb{R})$,
we say that $\left\{ \omega_{1},\ldots,\omega_{K}\right\} \times\{f^{1},\dots,f^{N}\}$
is a \emph{set of measurements}.
\end{defn}
The first concern is whether there exists a set of measurements such
that \eqref{eq:const-F} is satisfied everywhere by a subset of this
set. The precise meaning of this statement is given by the following
definition. 
\begin{defn}
\label{def:zeta-complete-1} Take $\Omega'\subseteq\Omega$. Given
$K,N\in\mathbb{N}^{*}$, a set of measurements $\left\{ \omega_{1},\ldots,\omega_{K}\right\} \times\{f^{1},\dots,f^{N}\}$
is \emph{$F$-complete in $\Omega'$} if there exists an open cover
of $\Omega'$
\[
\Omega'=\bigcup_{p=1}^{P}\Omega'_{p},
\]
such that for each $p$ there exist $k\in\{1,\ldots,K\}$ and $i_{1},\ldots,i_{d+1}\in\{1,\ldots,N\}$
such that 
\begin{equation}
\bigl|F\bigl(\omega_{k},f^{i_{1}}\dots,f^{i_{d+1}}\bigr)(x)\bigr|>0,\qquad x\in\Omega'_{p}.\label{eq:zeta-complete-1}
\end{equation}

\end{defn}
In other words, a $F$-complete set of measurements gives a cover
of the domain $\Omega'$ into a finite collection of subdomains, such
that the constraints \eqref{eq:zeta-complete-1} are satisfied in
each subdomain for different frequencies and boundary conditions.

Several results show the existence of such $F$-complete sets. In
the single-frequency case, namely with $K=1$, the existence of such
$F$-complete sets can be proved by using Complex Geometric Optics
(CGO) solutions \cite{CALDERON-1980,SYLVESTER-UHLMANN-87,bal2010inverse}
or the Runge approximation property \cite{bal2011reconstruction,bal2011quantitative,bal2013reconstruction}
under appropriate regularity hypotheses on $a,\epsilon$ and $\sigma$.
When using CGO, only $P=2$ subdomains are needed, while with the
Runge approximation approach $P$ is larger than 2. These approaches
do not indicate how suitable boundary conditions should be chosen
in practice, as the proof of their existence relies on the unknown
coefficients: without additional a priori information, the search
for these boundary conditions requires many trials. Moreover, since
CGO are very oscillatory, they may require a very sophisticated practical
apparatus to be implemented.

An alternative method consists in fixing the boundary conditions and
varying the frequency.
\begin{thm}
\label{fac:two} Take $d=2$ and suppose that $\Omega$ is convex
and $a\in C^{0,\alpha}$ for some $\alpha>0$. There exist $K\in\mathbb{N}^{*}$
and $C>0$ depending only on $\Omega$, $\Omega^{\prime}$, $\lambda$,
$\alpha$, $A_{\min}$ and $A_{\max}$ such that
\[
\{\omega_{k}=A_{\min}+\left(A_{\max}-A_{\min}\right)\frac{k-1}{K-1}:k=1,\ldots,K\}\times\{1,x_{1},x_{2}\}
\]
is $F$-complete in $\Omega^{\prime}$. More precisely, there exists
an open cover $\Omega'=\bigcup_{k=1}^{K}\Omega'_{k}$ such that 
\[
\left|F(\omega_{k},1,x_{1},x_{2})\right|\ge C\mbox{ in }\Omega'_{k}.
\]
This result also holds when $d=3$ with the boundary conditions $1,x_{1},x_{2},x_{3}$
provided that $\left\Vert a-I_{d}\right\Vert _{C^{0,\alpha}}\leq\delta$
where $\delta$ depends only on $\Omega$, $\lambda$ and $\alpha$.
\end{thm}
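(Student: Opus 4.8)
The idea is to use the frequency $\omega$ itself as the parameter along which one gains non‑degeneracy: at $\omega=0$ the functional $F$ reduces to a Jacobian that Rad\'o--Kneser--Choquet theory keeps away from zero, and holomorphy in $\omega$ then propagates this to a finite, explicitly located set of frequencies. So the first step is to examine $\omega=0$. With $f^1=1$ one has $u_0^1\equiv1$, while for $j=1,\dots,d$ the function $u_0^{1+j}$ solves $\div(a\nabla u_0^{1+j})=0$ with $u_0^{1+j}=x_j$ on $\partial\Omega$; since $\nabla u_0^1=0$, expanding the determinant in~\eqref{eq:def-F} along its first column gives $F(0,1,x_1,\dots,x_d)=\det(\nabla u_0^2,\dots,\nabla u_0^{d+1})$, the Jacobian of $U_0:=(u_0^2,\dots,u_0^{d+1})$. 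When $d=2$ and $\Omega$ is convex, $U_0$ has identity boundary values, so by the generalised Rad\'o--Kneser--Choquet theorem (Alessandrini--Nesi; Bauman, Marin and \v{S}ver\'ak) $U_0$ is a homeomorphism of $\overline\Omega$ onto $\overline\Omega$ with positive Jacobian; combined with interior Schauder estimates (valid since $a\in\Calprsca$) this makes $\det DU_0$ continuous and strictly positive in $\Omega$. When $d=3$ and $a=I_d$ one has $u_0^{1+j}=x_j$, hence $F(0,\cdot)\equiv1$, so continuous dependence on $a$ gives the same positivity once $\Vert a-I_d\Vert_{\Calprsca}\le\delta$ with $\delta=\delta(\Omega,\lambda,\alpha)$ small. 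Either way, fixing an open $\Omega''$ with $\Omega'\Subset\Omega''\Subset\Omega$ and using compactness, we obtain $c_0>0$ with $|F(0,1,x_1,\dots,x_d)(x)|\ge2c_0$ for all $x\in\overline{\Omega''}$.

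The second step is to exploit holomorphy in $\omega$. The resolvent of~\eqref{eq:combined-i-helmholtz-1} depends holomorphically on the polynomial $\omega^2\epsi+\ii\omega\sigma$, so $\omega\mapsto u_\omega^i$ is holomorphic from $D=\C\setminus\Sigma$ into $\V$, and interior elliptic regularity upgrades this to holomorphy into $C^1(\overline{\Omega''})$. Choose a bounded open $U$, symmetric about $\R$, with $\overline U\subset D$ and $[0,A_{\max}]\subset U$ (possible because $\Sigma$ is discrete). Then, for each fixed $x\in\overline{\Omega''}$, the scalar function $g_x(\omega):=F(\omega,1,x_1,\dots,x_d)(x)$ is holomorphic on $U$, jointly continuous in $(\omega,x)$, and obeys the uniform bounds $|g_x(0)|\ge2c_0$ and $\sup_{\overline U}|g_x|\le C_1$. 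It therefore suffices to pick a threshold $C>0$, an integer $K$, and then, with the equispaced $\omega_k=A_{\min}+(A_{\max}-A_{\min})\tfrac{k-1}{K-1}$, set $\Omega'_k:=\{x\in\Omega':|F(\omega_k,1,x_1,\dots,x_d)(x)|>C\}$, which are open: one only has to guarantee that for every $x\in\overline{\Omega'}$ at least one $\omega_k$ gives $|g_x(\omega_k)|>C$.

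For the third step, fix $x$ and set $E_x:=\{\omega\in\A:|g_x(\omega)|\le C\}$. Two facts, both uniform in $x$, control $E_x$. First, $\omega\mapsto|g_x(\omega)|^2-C^2$ is real-analytic on $\A$ and (for $C\le c_0$) not identically zero, its holomorphic extension being bounded above near $[0,A_{\max}]$ and bounded below at $0$, so Jensen's formula bounds its number of zeros in $\A$ by some $M=M(c_0,C_1,U)$; hence $E_x$ is a union of at most $M$ closed intervals. Second, by the Hadamard three‑circle inequality along a chain of discs joining $0$ to a subinterval $J\subset\A$ on which $|g_x|\le C$, one gets $2c_0\le|g_x(0)|\le C_1(C/C_1)^{\beta(|J|)}$ with $\beta(|J|)>0$ increasing in $|J|$, so each of those $\le M$ intervals has length at most $\ell_0=\ell_0(c_0,C_1,U,C)$, with $\ell_0\to0$ as $C\to0$. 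Now fix $C=c_0$ to fix $M$, decrease $C$ (keeping $C\le c_0$) so that $M\ell_0(C)<A_{\max}-A_{\min}$, and take $K$ so large that $M\bigl(1+\ell_0(K-1)/(A_{\max}-A_{\min})\bigr)<K$. Being covered by at most $M$ intervals of length $\le\ell_0$, the set $E_x$ then contains fewer than $K$ of the points $\omega_k$, so some $\omega_k\notin E_x$; this gives the covering property of the $\Omega'_k$ and, with the theorem's constant taken to be $C$, the result.

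The combinatorics of the third step is routine; the substance, and the main obstacle, is the \emph{uniformity} of $c_0$ and $C_1$ in the coefficient $a$. For $d=3$ there is no difficulty, since everything is a perturbation of the fixed configuration $a=I_d$ (in particular $\Sigma$, and hence $U$, $C_1$ and $\ell_0$, is controlled uniformly). For $d=2$ one needs a \emph{quantitative} version of the Rad\'o--Kneser--Choquet/Alessandrini--Nesi theorem, furnishing a lower bound for $\det DU_0$ on $\Omega''$ that depends only on $\lambda$, $\Omega$ and $\Omega'$ and not on $\Vert a\Vert_{\Calprsca}$, together with resolvent bounds on $\A$ and a domain of holomorphy $U\subset\C\setminus\Sigma$ joining $\A$ to $0$ whose geometry is controlled independently of $a$ — here the standing hypothesis $\A\subseteq D$ and the discreteness of $\Sigma$ have to be used with care, since real poles of $\omega\mapsto u_\omega^i$ in $(0,A_{\min})$ must be skirted without spoiling the uniform constants.
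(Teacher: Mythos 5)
Your proposal follows essentially the same strategy the paper attributes to this theorem (which it does not prove in full but cites from \cite{2014-albertigs}): nonvanishing of $F$ at $\omega=0$ via the Rad\'o--Kneser--Choquet/Alessandrini--Nesi result in 2D and perturbation of the identity in 3D, followed by a quantitative analytic-continuation argument in the complexified frequency to transport positivity to the interval $\A$. Your fleshing-out of the continuation step (Jensen plus three-circles along a complex path skirting the eigenvalues) and your caveat about uniformity of the constants match the mechanism of the cited proof, so this is the same route.
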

This result is proved in \cite{2014-albertigs} (see also \cite{alberti2013multiple,albertigsII}).
In this result, $K$ is bounded a priori and possibly large, but the
boundary conditions are fixed a priori and non-oscillatory. The proof
of Theorem~\ref{fac:two} relies on an analytic continuation argument
with respect to the frequency, immersed in the complex plane. When
$\omega=0$, problem \eqref{eq:combined-i-helmholtz-1} becomes 
\[
\left\{ \begin{array}{l}
-\div(a\,\nabla u_{0})=0\qquad\text{in \ensuremath{\Omega},}\\
u_{0}=f\qquad\text{on \ensuremath{\partial\Omega}.}
\end{array}\right.
\]
For the boundary condition $f^{1}=1$, the solution is simply $u_{0}^{1}\equiv1$.
In two dimensions, for the boundary conditions $f^{2}=x_{1}$ and
$f^{3}=x_{2}$, since $\Omega$ is convex, it is known that $\mbox{det}\left(\nabla u_{0}^{2},\nabla u_{0}^{3}\right)>0$
in $\Omega$ \cite{alessandrinimagnanini1994,bauman2001univalent,ALESSANDRINI-NESI-01}.
Therefore 
\begin{equation}
F(0,1,x_{1},x_{2})=1\times\det\left[\begin{array}{ccc}
1 & u_{0}^{2} & u_{0}^{3}\\
\\
0 & \nabla u_{0}^{2} & \nabla u_{0}^{3}
\end{array}\right]>0\mbox{ in }\Omega.\label{eq:2D}
\end{equation}
In higher dimension such a result is not available, however Schauder
elliptic regularity theory shows that if 
\begin{equation}
\|a-I_{d}\|_{C^{0,\alpha}}\leq\delta\label{eq:small-condition}
\end{equation}
for some $\delta>0$ small enough, then 
\begin{equation}
F(0,1,x_{1},\ldots,x_{d})=1\times\det\left[\begin{array}{cccc}
1 & u_{0}^{2} & \ldots & u_{0}^{d+1}\\
\\
0 & \nabla u_{0}^{2} &  & \nabla u_{0}^{d+1}
\end{array}\right]>0\mbox{ in }\Omega.\label{eq:3D}
\end{equation}
The principle of the proof of Theorem~\ref{fac:two} is then to show
that this positivity property can be transported to any interval $\mathcal{A}$
in a predictable manner \cite{2014-albertigs}. 

The goal of this article is to investigate what is the minimal number
of required frequencies $K$ (or, equivalently, the number $P$ of
subdomains in Definition~\ref{def:zeta-complete-1}) in dimension
$d\geq2$, for the fixed boundary conditions $1,x_{1},\ldots,x_{d}$.
We consider a technically very convenient particular case, namely
we assume that
\begin{equation}
a,\epsi\mbox{, and }\sigma\mbox{ are real analytic in }\Omega.\label{eq:analytic}
\end{equation}
The main result of the paper reads as follows.
\begin{thm}
\label{thm:d+1 frequencies-helmholtz}Assume that \eqref{eq:ellipticity_a-multi},
\eqref{eq:bounds_epsilon_multi} and \eqref{eq:analytic} hold true.
Suppose that $\A\subseteq D$ and take $F$ as in \eqref{eq:def-F},
$\Omega'\Subset\Omega$ and $f^{1},\dots,f^{d+1}\in H^{\frac{1}{2}}(\partial\Omega;\mathbb{R})$.
If
\begin{equation}
F\left(0,f^{1},\dots,f^{d+1}\right)(x)\neq0,\qquad x\in\Omega,\label{eq:zeta-helm-assumtion 0}
\end{equation}
then
\[
\bigl\{(\k_{1},\dots,\k_{d+1})\in\A^{d+1}:\{\k_{k}\}_{k}\times\{f^{1},\ldots,f^{d+1}\}\text{ is }F\text{-complete in \ensuremath{\Omega'}}\bigr\}
\]
is open and dense in $\A^{d+1}$.\end{thm}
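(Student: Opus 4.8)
The plan is to recast $F$--completeness as a statement about the common zeros of a single analytic function of the frequency, and then run a dimension count on the frequency variable, using crucially that the solutions are analytic both in $x$ and in $\omega$.

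\emph{Step 1 (reduction and analyticity).} I would set
\[
g(\omega,x):=\det\begin{pmatrix}u_\omega^1(x)&\cdots&u_\omega^{d+1}(x)\\ \nabla u_\omega^1(x)&\cdots&\nabla u_\omega^{d+1}(x)\end{pmatrix},
\]
so that $F(\omega,f^1,\dots,f^{d+1})=u_\omega^1\,g(\omega,\cdot)$ and hypothesis \eqref{eq:zeta-helm-assumtion 0} reads $g(0,\cdot)\neq 0$ in $\Omega$. Since $-\div(a\nabla\cdot)$ is coercive on $\Vr$ we have $0\notin\Sigma$, so $0$ lies in the connected open set $D=\C\setminus\Sigma\supseteq\A$; by analytic Fredholm theory $\omega\mapsto u_\omega^i$ is holomorphic from $D$ into $\Hone$, by interior elliptic regularity with constants locally uniform in $\omega$ it is holomorphic into $\Cl^1_{\mathrm{loc}}(\Omega;\C)$, and by interior analytic regularity — available since $a,\epsi,\sigma$ are real analytic by \eqref{eq:analytic}, with $\omega$--uniform estimates — each $u_\omega^i$ is real analytic in $x$. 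Hence $g$ is jointly real analytic on $D\times\Omega$, and for each fixed $x$ it is holomorphic in $\omega$ on $D$. Writing $Z_\omega:=\{x\in\Omega:g(\omega,x)=0\}$, a routine covering argument (where $g(\omega,\cdot)\neq0$ it is nonzero on a ball; $\overline{\Omega'}\Subset\Omega$ is compact; and $g(\omega,x)\neq0$ forces $(u_\omega^1,\dots,u_\omega^{d+1})(x)\neq0$, since those $d+1$ vectors $(u_\omega^i,\nabla u_\omega^i)(x)$ could not be independent if all $u_\omega^i(x)$ vanished, so $|F(\omega,f^{i_1},\dots,f^{i_{d+1}})(x)|>0$ for a suitable reordering) shows: $\{\omega_k\}_k\times\{f^1,\dots,f^{d+1}\}$ is $F$--complete in $\Omega'$ if and only if $g(\omega_1,\cdot),\dots,g(\omega_{d+1},\cdot)$ have no common zero in $\overline{\Omega'}$.

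\emph{Step 2 (openness).} The map $\vec\omega\mapsto m(\vec\omega):=\min_{x\in\overline{\Omega'}}\sum_{k=1}^{d+1}\lvert g(\omega_k,x)\rvert^2$ is continuous on $\A^{d+1}$, being the minimum over the compact set $\overline{\Omega'}$ of a jointly continuous function; by Step~1 the set of $F$--complete tuples is $\{m>0\}$, which is open.

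\emph{Step 3 (density).} Fix $\vec\omega^{\,0}\in\A^{d+1}$ and $\epsilon>0$; I would construct an $F$--complete $\vec\omega$ with $\lvert\omega_k-\omega_k^0\rvert<\epsilon$ for all $k$. The key analytic input is: for every $x\in\Omega$ the set $N(x):=\{\omega\in\A:g(\omega,x)=0\}$ is finite, since $g(\cdot,x)$ is holomorphic on the connected set $D\ni0$ with $g(0,x)\neq0$, hence $g(\cdot,x)\not\equiv0$ and its zeros are isolated in the compact $\A$. Cover $\overline{\Omega'}$ by finitely many closed balls contained in $\Omega$ and let $\overline{K}$ be their union, a compact sub-analytic set with $\dim\overline{K}\le d$. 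Then choose $\omega_1,\dots,\omega_{d+1}$ inductively so that $Y_m:=\bigcap_{i=1}^m Z_{\omega_i}\cap\overline{K}$ satisfies $\dim Y_m\le d-m$; this forces $Y_{d+1}=\emptyset$, i.e.\ no common zero of $g(\omega_1,\cdot),\dots,g(\omega_{d+1},\cdot)$ in $\overline{\Omega'}\subseteq\overline{K}$, as required. For the inductive step put $Y_0=\overline{K}$, assume $Y_{m-1}$ compact sub-analytic with $\dim Y_{m-1}\le d-m+1$, and consider
\[
S:=\{(x,\omega)\in Y_{m-1}\times\A:g(\omega,x)=0\},
\]
a compact sub-analytic set (zero locus of the jointly analytic $g$ over the sub-analytic $Y_{m-1}\times\A$) whose fibres over $x\in Y_{m-1}$ are the finite sets $N(x)$, so $\dim S\le\dim Y_{m-1}\le d-m+1$. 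By the dimension theory of sub-analytic sets, the fibre-dimension locus $E_m:=\{\omega\in\A:\dim(Z_\omega\cap Y_{m-1})\ge d-m+1\}$ of the projection $S\to\A$ satisfies $\dim E_m\le\dim S-(d-m+1)\le0$ and is therefore finite. Choosing $\omega_m\in(\omega_m^0-\epsilon,\omega_m^0+\epsilon)\cap\A\setminus E_m$ gives $Y_m=Z_{\omega_m}\cap Y_{m-1}$ compact sub-analytic with $\dim Y_m\le d-m$, completing the induction, hence density.

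\emph{On the main obstacle, and a remark.} The analytic inputs of Step~1 (holomorphic dependence on $\omega$ together with interior real-analytic regularity in $x$, with estimates sharp enough to give joint real analyticity of $g$) are classical but must be assembled with care; I expect the genuinely delicate part to be Step~3, where one must organise the sub-analytic bookkeeping correctly — and note that the reason exactly $d+1$ frequencies are needed is that one dimension of the common zero set is peeled off per frequency, a mechanism that rests entirely on the finiteness of $N(x)$ for every $x$, which is precisely where the normalisation \eqref{eq:zeta-helm-assumtion 0} at $\omega=0$ enters. The same construction in fact yields the stronger measure statement: $W:=\{(x,\vec\omega)\in\overline{K}\times\A^{d+1}:g(\omega_k,x)=0\ \forall k\}$ is compact sub-analytic with finite fibres over $\overline{K}$, so $\dim W\le d$, and its projection to $\A^{d+1}$ is sub-analytic of dimension $\le d<d+1$, hence Lebesgue-null; thus the set of $F$--complete tuples has full measure.
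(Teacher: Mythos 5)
Your overall architecture coincides with the paper's: openness follows from continuity of a minimum over the compact $\overline{\Omega'}$, and density is obtained by inductively choosing one frequency at a time so that the dimension of the common zero locus inside a fixed compact subset drops by at least one per step, the set of ``bad'' frequencies at each step being finite because $\omega\mapsto g(\omega,x)$ is holomorphic on the connected set $D\ni 0$ and nonvanishing at $\omega=0$. Where you genuinely diverge is in how that finiteness is proved. The paper's Lemma~\ref{lem:finite set} argues by contradiction: it takes a sequence of bad frequencies, uses a Whitney stratification of the analytic variety $X$ and the Enciso--Peralta theorem to force a whole top-dimensional stratum into each $Z(\omega_n)$, and then derives $\z_0(x)=0$ at an accumulation point. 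This needs only \emph{separate} regularity ($x$-analyticity for each fixed $\omega$, $\omega$-holomorphy for each fixed $x$), which is exactly what Lemma~\ref{lem:helmholtz-analytic-regularity} and the analytic dependence on $\omega$ provide. You instead form the incidence set $S=\{(x,\omega):g(\omega,x)=0\}$ over $Y_{m-1}\times\A$ and apply the fibre-dimension theorem for subanalytic sets twice (once over $x$, using finiteness of $N(x)$, to bound $\dim S$; once over $\omega$, to bound the bad set). This is a legitimate and arguably cleaner route, and it buys you more: as you note, it yields that the exceptional set of tuples is subanalytic of dimension $\le d$, hence Lebesgue-null in $\A^{d+1}$, which is stronger than density.

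The one point you should not pass over so quickly is the \emph{joint} real analyticity of $(\omega,x)\mapsto g(\omega,x)$ on $D\times\Omega$, without which $S$ is not subanalytic and your entire Step~3 has no foundation. Separate analyticity (holomorphy in $\omega$ plus real analyticity in $x$) together with ``locally uniform estimates'' does not by itself constitute a proof of joint analyticity; you need either a Browder/Siciak-type theorem on separately analytic functions, or, more directly, to show that $\omega\mapsto u_\omega^i$ is holomorphic as a map into a Banach space of functions holomorphic on a fixed complex neighbourhood of each compact subset of $\Omega$, which requires the Morrey--Nirenberg radius-of-convergence estimates to be uniform on compact subsets of $D$. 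This is true and provable here, but it is a genuine extra input that the paper's stratification argument deliberately avoids needing; as written, your proposal asserts it rather than establishes it. A second, shared, imprecision: both you and the paper identify $F$-completeness in $\Omega'$ with nonvanishing of $\sum_k|g(\omega_k,\cdot)|$ on the \emph{closure} $\overline{\Omega'}$; the latter implies the former (which is all that is needed for density and for openness of the set you actually construct), but the converse is not literally true, so strictly speaking one proves that the $F$-complete set \emph{contains} an open dense subset. Finally, your reduction from $F=u_\omega^1 g$ to $g$ alone via a permutation of the boundary data is correct and consistent with Definition~\ref{def:zeta-complete-1}, though the paper simply keeps the factor $u_\omega^1$ and works with $\theta_\omega=F(\omega,f^1,\dots,f^{d+1})$ directly.
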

\begin{rem}
This argument can be used for many other partial differential equations,
see \cite{2014-albertigs} and \cite{alberti2013multiple,albertigsII,albertidphil,alberti-ammari-ruan-2014}
for variants of this argument. It applies in particular to the anisotropic
Maxwell system of equations.
\end{rem}

\begin{rem}
There is nothing special about the function $F$, which was used as
an illustration in this paper. We only use that $F(\omega,1,x_{1},\ldots,x_{d})$
expressed in terms of $u_{\omega}^{1},\ldots,u_{\omega}^{d+1}$, that
is, 
\[
F(\omega,1,x_{1},\ldots,x_{d})=\zeta\left(u_{\omega}^{1},\ldots,u_{\omega}^{d+1},\nabla u_{\omega}^{1},\ldots,\nabla u_{\omega}^{d+1}\right)
\]
where 
\[
\zeta\left(z_{1},\ldots,z_{d+1},\xi_{1},\ldots,\xi_{d+1}\right)=z_{1}\det\left(\left[\begin{array}{ccc}
z_{1} & \ldots & z_{d+1}\\
\xi_{1} & \ldots & \xi_{d+1}
\end{array}\right]\right),
\]
is holomorphic (complex analytic). Any other constraint that can be
written in terms of a holomorphic function $\zeta$ can be substituted
to $F$. 
\end{rem}
In view of \eqref{eq:2D} and \eqref{eq:3D}, the following conclusion
then naturally follows.
\begin{cor}
\label{cor:example} Assume that \eqref{eq:ellipticity_a-multi},
\eqref{eq:bounds_epsilon_multi} and \eqref{eq:analytic} hold true.
When $d=2$ assume that $\Omega$ is convex and when $d\geq3$ assume
that \eqref{eq:small-condition} holds. Suppose that $\A\subseteq D$
and take $\Omega'\Subset\Omega$. Then 
\[
\bigl\{(\k_{1},\dots,\k_{d+1})\in\A^{d+1}:\{\k_{k}\}_{k}\times\{1,x_{1}\ldots,x_{d}\}\text{ is }F\text{-complete in \ensuremath{\Omega'}}\bigr\}
\]
is open and dense in $\A^{d+1}$.\end{cor}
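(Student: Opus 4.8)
Corollary~\ref{cor:example} follows from Theorem~\ref{thm:d+1 frequencies-helmholtz} applied to the boundary data $f^{1}=1,\ f^{2}=x_{1},\dots,f^{d+1}=x_{d}$; the only thing to verify is the hypothesis \eqref{eq:zeta-helm-assumtion 0} of Theorem~\ref{thm:d+1 frequencies-helmholtz}, i.e.\ that $F(0,1,x_{1},\dots,x_{d})$ is non-zero on $\Omega$. When $d=2$ and $\Omega$ is convex this is \eqref{eq:2D}, a consequence of the classical positivity/univalence of the gradients of the harmonic conjugates of $x_{1},x_{2}$ on a convex domain \cite{alessandrinimagnanini1994,bauman2001univalent,ALESSANDRINI-NESI-01}. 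When $d\geq3$ and \eqref{eq:small-condition} holds it is \eqref{eq:3D}: for $a=I_{d}$ the solutions $u_{0}^{j}$ are the affine functions $1,x_{1},\dots,x_{d}$ and $F(0,1,x_{1},\dots,x_{d})\equiv1$, and global Schauder $C^{1,\alpha}$ estimates show that, for $\|a-I_{d}\|_{C^{0,\alpha}}$ small, the $u_{0}^{j}$ stay $C^{1}(\overline{\Omega})$-close to these affine functions, so $F(0,1,x_{1},\dots,x_{d})$ stays close to $1$, in particular positive. This reduces everything to Theorem~\ref{thm:d+1 frequencies-helmholtz}, which I would prove as follows.

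Let $g(x,\omega)$ denote the $(d+1)\times(d+1)$ determinant appearing in \eqref{eq:def-F}, so that $F(\omega,f^{i_{1}},\dots,f^{i_{d+1}})=\pm\,u_{\omega}^{i_{1}}\,g(\cdot,\omega)$ for every choice of indices $i_{1},\dots,i_{d+1}$. Since $g(x,\omega)\neq0$ forces the first row $(u_{\omega}^{1},\dots,u_{\omega}^{d+1})(x)$ to be non-zero, a short refinement of open covers shows that $\{\omega_{k}\}_{k}\times\{f^{1},\dots,f^{d+1}\}$ is $F$-complete in $\Omega'$ \emph{if and only if}
\[
\Omega'\subseteq\bigcup_{k=1}^{d+1}\{x\in\Omega:g(x,\omega_{k})\neq0\}.
\]
Because $a,\epsi,\sigma$ are real analytic and $\omega^{2}\epsi+\ii\omega\sigma$ is polynomial in $\omega$, interior analytic elliptic regularity together with the holomorphic dependence of the resolvent on the spectral parameter shows that $(x,\omega)\mapsto g(x,\omega)$ is real analytic in $x\in\Omega$ and holomorphic in $\omega\in D$, while $g(\cdot,0)\neq0$ on $\Omega$ by \eqref{eq:zeta-helm-assumtion 0}. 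In particular, for each fixed $x$ the holomorphic function $\omega\mapsto g(x,\omega)$ does not vanish at $\omega=0$, hence is not identically zero on the connected open set $D$, so $N_{x}:=\{\omega\in\mathcal{A}:g(x,\omega)=0\}$ is finite.

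For density, fix a compact $L$, a finite union of closed balls, with $\overline{\Omega'}\subseteq L\subseteq\Omega$, and set
\[
\mathcal{Z}:=\bigl\{(x,\omega_{1},\dots,\omega_{d+1})\in L\times\mathcal{A}^{d+1}:g(x,\omega_{k})=0\text{ for }k=1,\dots,d+1\bigr\}.
\]
Then $\mathcal{Z}$ is compact (continuity of $g$ and compactness of $L\times\mathcal{A}^{d+1}$), it is subanalytic, hence a finite union of $C^{1}$ submanifolds, and its fibre over each $x\in L$ is the finite set $N_{x}^{d+1}$; since these fibres are finite, $\dim\mathcal{Z}\leq\dim L\leq d$. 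Therefore the projection $\pi(\mathcal{Z})\subseteq\mathcal{A}^{d+1}$ is compact and of dimension $\leq d<d+1$, so it is nowhere dense (closed and of Lebesgue measure zero). For every $(\omega_{1},\dots,\omega_{d+1})$ in the open dense set $\mathcal{A}^{d+1}\setminus\pi(\mathcal{Z})$ and every $x\in L\supseteq\Omega'$ there is some $k$ with $g(x,\omega_{k})\neq0$, so such a tuple yields an $F$-complete set of measurements; hence the set in the statement contains an open dense set and is dense. Openness of that set is the easier direction: for a tuple in it one starts from the finite open cover supplied by $F$-completeness and uses continuity of $g$ together with a compactness argument to see that the covering property persists under small perturbations of the frequencies.

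The only non-formal ingredient is the joint regularity of $g$ --- real analytic in $x$, holomorphic in $\omega$ --- which is exactly where assumption \eqref{eq:analytic} enters and which I expect to be the point requiring the most care, combining interior analytic regularity for the elliptic operator with holomorphic dependence on $\omega\in D$. Granting this, the elementary facts on the dimension of real analytic sets, and the positivity statements \eqref{eq:2D} and \eqref{eq:3D}, the argument is short; note that the number $d+1$ of frequencies is precisely what makes $\dim\mathcal{Z}\leq d<\dim\mathcal{A}^{d+1}$.
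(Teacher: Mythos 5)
Your deduction of the corollary itself is exactly the paper's: the authors state that Corollary~\ref{cor:example} ``naturally follows'' from Theorem~\ref{thm:d+1 frequencies-helmholtz} once hypothesis \eqref{eq:zeta-helm-assumtion 0} is supplied by \eqref{eq:2D} (convexity, $d=2$) and \eqref{eq:3D} (Schauder perturbation of the identity, $d\geq3$), which is precisely your first paragraph. Where you genuinely diverge is in how you prove the underlying theorem. The paper routes it through Proposition~\ref{prop:d+1 frequencies-general}: an induction on the number of frequencies in which, for all but finitely many $\k$, adding $Z(\k)$ strictly decreases the dimension of the common zero set (Lemma~\ref{lem:finite set}, proved via Whitney stratification, a rigidity theorem of Enciso--Peralta for analytic functions vanishing on a full-dimensional piece of a stratum, and the holomorphy of $\k\mapsto\z_{\k}(x)$ at a \emph{single} point $x$). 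You instead form the bad set $\mathcal{Z}\subseteq L\times\A^{d+1}$, observe its fibres over $x$ are finite, bound $\dim\mathcal{Z}\le d$, and project. Both arguments are correct in outline and both hinge on the same count ($d+1$ frequencies versus dimension $d$), but the costs differ: your route requires \emph{joint} real-analyticity of $(x,\k)\mapsto g(x,\k)$ so that $\mathcal{Z}$ is an analytic (or at least subanalytic) set, plus the fibre-dimension and proper-projection theorems of subanalytic geometry, whereas the paper only ever uses the separate regularity it actually cites --- $x\mapsto u_{\k}(x)$ real analytic for fixed $\k$ (Morrey--Nirenberg) and $\k\mapsto u_{\k}(x)$ holomorphic for fixed $x$. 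You correctly flag joint analyticity as the delicate point; it does hold here (the coefficients depend polynomially on $\k$, so analytic-parameter elliptic regularity applies), but it is an extra ingredient the paper's stratification argument deliberately avoids, and you would need to prove it rather than assume it. Two minor points: the paper works directly with the ordered tuple $\theta_{\k}=F(\k,f^{1},\dots,f^{d+1})$ and so never needs your refinement selecting which $u^{i_{1}}_{\k}$ is nonvanishing; and your openness sketch shares with the paper the small imprecision of passing between a condition on $\Omega'$ and one on $\overline{\Omega'}$, which should be stated on the compact closure for the continuity argument to close.
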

\begin{rem}
\label{rem:d+1-frequencies-helmholtz-C}This result shows that almost
any $d+1$ frequencies in $\A$ give a F-complete set. An a priori
estimate on the lower bound $C$ for $F$ (as in Theorem~\ref{fac:two})
cannot be obtained for arbitrary frequencies in an open and dense
set in $\A^{d+1}$, as this bound tends to zero when the frequencies
are chosen near the residual set.
\end{rem}
It is easy to verify that this result is optimal in dimension $1$
and $2$ numerically. To intuitively see that $K=d+1$ is natural
in any dimension, consider the level set $u_{\omega}^{1}=0$, for
a given $\omega$. It is a priori a $d-1$ dimensional object. If
we consider the intersection of two such level sets for $\omega_{1}$
and $\omega_{2}$, we expect the resulting object to be $d-2$ dimensional,
the intersection of $d$ such level sets to be zero dimensional, i.e.
discrete, and the intersection of $d+1$ level sets to be empty. Figure~\ref{fig:sketch}
is a graphical illustration of this idea.

\begin{figure}
\centering{}\includegraphics[width=0.8\columnwidth]{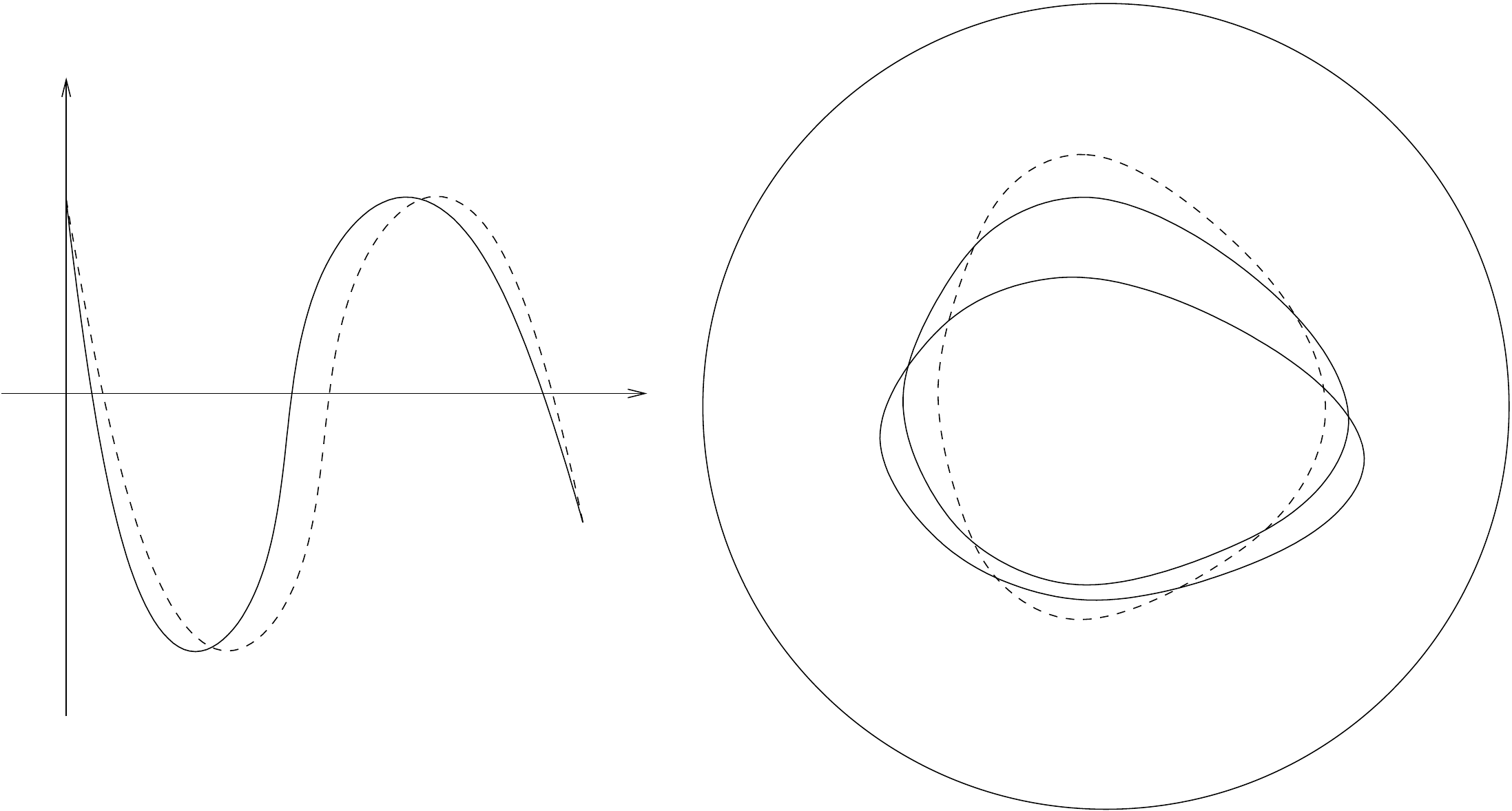}\caption{\label{fig:sketch}A sketch of the $d+1$ rationale in dimension $1$
and $2$. On the left, we represent two solutions with the same boundary
data when $d=1$: the locus $\{u_{\omega}=0\}$ moves with the parameter
$\omega$, and the intersection of two zero sets is empty. On the
right, we represent three zero level sets of the function $F$ in
two dimensions. The outer circle represents the boundary of $\Omega$.
When only two values of $\omega$ are used, the zero level sets still
contain common points, but the intersection of the three level sets
is empty.}
\end{figure}

The rest of the paper is devoted to the proof of Theorem~\ref{thm:d+1 frequencies-helmholtz}.

\section{Proof of Theorem~\ref{thm:d+1 frequencies-helmholtz}}

First recall that the analyticity of the coefficients implies the
analyticity of the solutions. Let $C^{A}(\Omega;\C)$ denote the space
of complex-valued real analytic maps over $\Omega$.
\begin{lem}[\selectlanguage{british}%
\cite{morrey-nirenberg-1957}\selectlanguage{english}%
]
\label{lem:helmholtz-analytic-regularity}Assume that \eqref{eq:ellipticity_a-multi},
\eqref{eq:bounds_epsilon_multi} and \eqref{eq:analytic} hold true.
If $\k\in D$ and $f^{i}\in H^{\frac{1}{2}}(\partial\Omega;\mathbb{R})$
then $u_{\omega}^{i}\in C^{A}(\Omega;\C)$.
\end{lem}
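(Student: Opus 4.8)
The plan is to reduce the statement to the classical interior analyticity result of Morrey and Nirenberg for solutions of linear elliptic equations with real analytic coefficients. First I would recall that $u=u_\omega^i\in H^1(\Omega;\C)$ is the unique weak solution of $-\div(a\,\nabla u)=(\omega^2\epsi+\ii\omega\sigma)\,u$ in $\Omega$, with the prescribed Dirichlet datum $f^i$ on $\bo$. Splitting $u=v+\ii w$ into real and imaginary parts, the pair $(v,w)$ solves the real $2\times 2$ elliptic system $-\div(a\,\nabla v)=\omega^2\epsi v-\omega\sigma w$, $-\div(a\,\nabla w)=\omega^2\epsi w+\omega\sigma v$ in $\Omega$; since this is only an interior regularity statement (the conclusion is membership in $C^A(\Omega;\C)$, not up to the boundary), the boundary condition plays no role beyond ensuring existence and uniqueness, which is guaranteed by $\omega\in D$.

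Next I would put the system into a form to which \cite{morrey-nirenberg-1957} applies. The principal part $-\div(a\,\nabla\cdot)$ has real analytic coefficients by \eqref{eq:analytic}, and it is uniformly elliptic by \eqref{eq:ellipticity_a-multi}; the zeroth-order coupling coefficients $\omega^2\epsi$, $\pm\omega\sigma$ are real analytic by \eqref{eq:analytic} as well. The one technical point is regularity bootstrapping: the theorem of \cite{morrey-nirenberg-1957} is usually stated for classical (say $C^{2}$, or $C^{1,\alpha}$) solutions, whereas a priori we only have $u\in H^1$. This is handled by the standard elliptic bootstrap: since $u\in H^1\subset L^2$, the right-hand side $(\omega^2\epsi+\ii\omega\sigma)u$ lies in $L^2$, and $a\in C^A\subset C^{0,1}$, so interior $H^2$ regularity applies, hence $u\in H^2_{loc}$; iterating (the right-hand side gains a derivative at each step) gives $u\in H^k_{loc}$ for all $k$, so by Sobolev embedding $u\in C^\infty(\Omega;\C)$, in particular $u$ is a classical solution on every $\Omega''\Subset\Omega$. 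Actually, since the coefficients are merely analytic and not constant, I would phrase the bootstrap in Hölder spaces using Schauder theory ($a\in C^{k,\alpha}$ for every $k$), which is cleaner: $u\in C^{0,\alpha}$ forces the right-hand side into $C^{0,\alpha}$, hence $u\in C^{2,\alpha}$, and so on, giving $u\in C^\infty$.

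Once $u$ is known to be a smooth (classical) solution of the elliptic system with real analytic coefficients on any $\Omega''\Subset\Omega$, the Morrey--Nirenberg theorem yields $u\in C^A(\Omega'';\C)$. As $\Omega''\Subset\Omega$ was arbitrary and analyticity is a local property, we conclude $u_\omega^i\in C^A(\Omega;\C)$. The only "obstacle" here is purely bookkeeping — checking that the Morrey--Nirenberg hypotheses (uniform ellipticity, analytic coefficients, sufficient a priori regularity of the solution) are met — and it is dispatched by the elliptic regularity bootstrap described above; there is no genuine difficulty, which is why in the paper the lemma can simply be cited.
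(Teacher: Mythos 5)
Your proposal is correct and matches the paper's treatment: the lemma is stated there purely as a citation of the Morrey--Nirenberg interior analyticity theorem, and your reduction (split into a real elliptic system, bootstrap the $H^1$ weak solution to $C^\infty$ via standard interior regularity so that the classical hypotheses apply, then invoke \cite{morrey-nirenberg-1957} locally) is exactly the routine verification that citation is standing in for. The only detail worth noting is that for $\omega\in D\subseteq\C$ not necessarily real the real/imaginary splitting of the zeroth-order coefficient changes slightly, but the coefficients remain real analytic and the argument is unaffected.
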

Theorem~\ref{thm:d+1 frequencies-helmholtz} will be a consequence
of the following result. 
\begin{prop}
\label{prop:d+1 frequencies-general}Let $\Omega,\Omega'\subseteq\R^{d}$
be smooth domains such that $\Omega'\Subset\Omega$. Let $D\subseteq\C$
be an open set such that $0\in D$ and $\A\subseteq D$. Consider
a map
\[
\z\colon D\to C^{A}(\Omega;\C),\qquad\k\mapsto\z_{\k}
\]
such that for all $x\in\Omega$, $\k\in D\mapsto\z_{\k}(x)\in\C$
is holomorphic and $\z_{0}(x)\neq0$. The set
\[
\Bigl\{(\k_{1},\dots,\k_{d+1})\in\A^{d+1}:\min_{\overline{\Omega'}}(\left|\z_{\k_{1}}\right|+\cdots+\left|\z_{\k_{d+1}}\right|)>0\Bigr\}
\]
is open and dense in $\A^{d+1}$.
\end{prop}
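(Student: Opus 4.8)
The plan is to treat openness and density separately. \emph{Openness} is the easy half: the map $(\omega_1,\dots,\omega_{d+1})\mapsto \min_{\overline{\Omega'}}(|\z_{\omega_1}|+\cdots+|\z_{\omega_{d+1}}|)$ is continuous on $\A^{d+1}$, since $\omega\mapsto\z_\omega$ is continuous into, say, $C(\overline{\Omega'})$ (holomorphy in $\omega$ for each fixed $x$, together with the fact that the $\z_\omega$ form a normal family locally, gives local uniform continuity in $(\omega,x)$), so the preimage of $(0,+\infty)$ is open. For \emph{density}, fix $(\omega_1^0,\dots,\omega_{d+1}^0)\in\A^{d+1}$ and $\epsilon>0$; I must find frequencies $\omega_k$ within $\epsilon$ of $\omega_k^0$ whose zero sets $Z_k:=\{x\in\overline{\Omega'}:\z_{\omega_k}(x)=0\}$ have empty common intersection. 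The key structural fact is that each $\z_{\omega}$ is real analytic and not identically zero (by analytic continuation from $\z_0\neq0$, using that $\omega\mapsto\z_\omega(x)$ is holomorphic and $D$ is connected — or at least the connected component of $0$ contains $\A$; if $D$ need not be connected one first reduces to a chain of overlapping disks joining $0$ to $\A$), hence each $Z_k$ is a closed set of measure zero, indeed contained in a locally finite union of analytic hypersurfaces.

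The heart of the argument is a dimension-counting induction, making rigorous the heuristic in Figure~\ref{fig:sketch}. I would prove by induction on $j=1,\dots,d+1$ the statement: for generic $(\omega_1,\dots,\omega_j)$ near $(\omega_1^0,\dots,\omega_j^0)$, the set $Z_1\cap\cdots\cap Z_j$ has Hausdorff dimension $\le d-j$ (interpreting dimension $<0$ as ``empty''). The base case $j=1$ is the statement that $Z_1$ has dimension $\le d-1$, which holds for \emph{every} $\omega_1$ with $\z_{\omega_1}\not\equiv0$. For the inductive step, suppose $Y:=Z_1\cap\cdots\cap Z_{j-1}$ has dimension $\le d-j+1$ for a fixed generic choice of $\omega_1,\dots,\omega_{j-1}$; I must show that for almost every (or at least a dense set of) $\omega_j$ near $\omega_j^0$, the analytic hypersurface $Z_{\omega_j}$ meets $Y$ in a set of dimension $\le d-j$. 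The clean way to see this: consider the ``universal'' zero set $\mathcal{Z}:=\{(\omega,x):\omega\in B(\omega_j^0,\epsilon)\cap\R,\ x\in Y,\ \z_\omega(x)=0\}$. Because $\omega\mapsto\z_\omega(x)$ is holomorphic and $\z_0\neq0$, for each fixed $x$ the set of real $\omega$ with $\z_\omega(x)=0$ is \emph{discrete} (zeros of a nonzero holomorphic function), so the fiber of $\mathcal{Z}$ over each $x\in Y$ is at most countable, hence $\mathcal{Z}$ — as a subset of $Y\times\R$ with $\dim Y\le d-j+1$ — has dimension $\le d-j+1$. Projecting to the $\omega$-axis, Fubini/coarea-type reasoning (or simply: a dimension-$(d-j+1)$ set cannot surject onto an interval with all fibers of dimension $\ge d-j+1$ unless... ) shows that for a.e.\ $\omega_j$ the fiber $\mathcal{Z}_{\omega_j}=Z_{\omega_j}\cap Y$ has dimension $\le d-j$. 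Taking $j=d+1$ gives $\dim(Z_1\cap\cdots\cap Z_{d+1})\le -1$, i.e.\ $Z_1\cap\cdots\cap Z_{d+1}=\emptyset$, which since each $Z_k$ is compact is exactly the statement that the sum $|\z_{\omega_1}|+\cdots+|\z_{\omega_{d+1}}|$ is bounded below on $\overline{\Omega'}$.

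The main obstacle, and the step needing care, is the inductive dimension bound: one must ensure that intersecting the hypersurface $Z_{\omega_j}$ with the lower-dimensional analytic set $Y$ generically \emph{drops} the dimension, rather than, say, $Z_{\omega_j}$ containing a whole component of $Y$. This is where analyticity is essential and where I would use a stratification of $Y$ into analytic submanifolds together with the fact that $\z_\omega$ cannot vanish identically on any positive-dimensional stratum for more than a discrete set of $\omega$ (again because $\omega\mapsto\z_\omega$ restricted to a point of the stratum is a nonzero holomorphic function). Concretely: if $S$ is a stratum of dimension $m$ and $\z_{\omega_j}|_S\not\equiv0$, then $\{x\in S:\z_{\omega_j}(x)=0\}$ has dimension $\le m-1$; and the bad set of $\omega_j$ for which $\z_{\omega_j}$ vanishes identically on some stratum is a countable union of discrete sets, hence has empty interior, so avoidable within $B(\omega_j^0,\epsilon)$. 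Stitching the strata together with the product-set fiber argument above, and being slightly careful that ``$Y$'' itself depends on the earlier generic choices (so one works with a fixed such choice before perturbing $\omega_j$), completes the induction and hence the proof. An alternative to the measure-theoretic Fubini step, possibly cleaner for the write-up, is to argue directly: pick $\omega_1,\dots,\omega_d$ generically so that $Z_1\cap\cdots\cap Z_d$ is a discrete (hence finite, by compactness) set of points $\{p_1,\dots,p_M\}$, and then observe that for each $p_m$ the set of $\omega$ with $\z_\omega(p_m)=0$ is discrete, so all but countably many $\omega_{d+1}$ near $\omega_{d+1}^0$ avoid all $M$ points — giving $Z_1\cap\cdots\cap Z_{d+1}=\emptyset$ directly.
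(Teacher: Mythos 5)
Your proposal follows essentially the same route as the paper's proof: openness by continuity of $(\k_1,\dots,\k_{d+1})\mapsto\min_{\overline{\Omega'}}\sum_k|\z_{\k_k}|$, and density by an induction in which each new, generically chosen frequency drops the dimension of the common zero set by at least one, the drop being obtained from a Whitney stratification of the analytic variety $Y=Z_1\cap\dots\cap Z_{j-1}$ together with the observation that, for a fixed $x_0$, the holomorphic function $\k\mapsto\z_\k(x_0)$ is not identically zero (since $\z_0(x_0)\neq0$), so its zeros cannot accumulate in $\A$. The paper packages this as Lemma~\ref{lem:finite set} --- the set of bad frequencies for which the dimension fails to drop inside a fixed $\Omega''\Subset\Omega$ is \emph{finite} --- proved by a compactness/contradiction argument invoking \cite[Theorem 1.2]{ENCISO-PERALTA-2013}; your direct argument (vanishing identically on a stratum forces vanishing at a chosen point of it, and the set of such $\k$ is discrete) reaches the same conclusion with countability instead of finiteness, which is all that density requires. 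Your closing shortcut for the last step (reduce to finitely many points by compactness and avoid the finitely many bad frequencies attached to each) is likewise sound and mirrors the paper's final step $\dim(Z(\k_1,\dots,\k_{d+1})\cap\Omega'')<0$.

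One sub-argument should be discarded: the ``universal zero set'' $\mathcal{Z}\subseteq Y\times\R$ together with the claim that countable fibers over $x\in Y$ force $\dim\mathcal{Z}\le\dim Y$, followed by a Fubini-type projection. For Hausdorff dimension that inference is false --- the graph of a function $[0,1]\to\R$ has singleton fibers over the base yet can have dimension strictly larger than $1$ --- and you leave the projection step itself unfinished. Since your stratification argument covers the same ground rigorously, nothing is lost by dropping the Fubini variant. Two smaller points worth making explicit in a write-up: the identity-theorem steps need $0$ and $\A$ to lie in the same connected component of $D$ (automatic in the Helmholtz application, where $D=\C\setminus\Sigma$ with $\Sigma$ discrete, and which you rightly flag), and the continuity used for openness requires $\k\mapsto\z_\k$ to be continuous into $C(\overline{\Omega'})$, a point the paper also takes for granted.
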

First, let us see why Theorem~\ref{thm:d+1 frequencies-helmholtz}
follows from this result.
\begin{proof}[\selectlanguage{british}%
Proof of Theorem~\ref{thm:d+1 frequencies-helmholtz}\selectlanguage{english}%
]
 Consider the map 
\[
\theta\colon\omega\mapsto F\left(\omega,f^{1},\dots,f^{d+1}\right)=u_{\omega}^{1}\det\left(\left[\begin{array}{ccc}
u_{\omega}^{1} & \ldots & u_{\omega}^{d+1}\\
\\
\nabla u_{\omega}^{1} & \ldots & \nabla u_{\omega}^{d+1}
\end{array}\right]\right).
\]
In view of Lemma~\ref{lem:helmholtz-analytic-regularity}, $\theta_{\omega}\in\Cl^{A}(\Omega;\C)$.
By the general fact that for any $i$ and $x$ the map $\ensuremath{\omega\in D\mapsto(u_{\omega}^{i}(x),\nabla u_{\omega}^{i}(x))}\in\C^{d+1}$
is holomorphic \cite{2014-albertigs}, the map $\k\in D\mapsto\theta_{\k}(x)\in\C$
is holomorphic for all $x\in\Omega$. Moreover, $\theta_{0}(x)\neq0$
for all $x\in\Omega$ by \eqref{eq:zeta-helm-assumtion 0}.

We can apply Proposition~\ref{prop:d+1 frequencies-general} and
obtain that
\[
\Bigl\{(\k_{1},\dots,\k_{d+1})\in\A^{d+1}:\min_{\overline{\Omega'}}(\left|\z_{\k_{1}}\right|+\cdots+\left|\z_{\k_{d+1}}\right|)>0\Bigr\}
\]
is open and dense in $\A^{d+1}$. Note that the condition $\min_{\overline{\Omega'}}\sum_{k=1}^{d+1}\bigl|\theta_{\omega_{k}}\bigr|>0$
is equivalent to
\[
\text{for all }x\in\overline{\Omega'}\text{ there exists \ensuremath{k\;}such that \ensuremath{\bigl|F\left(\omega_{k},f^{1},\ldots f^{d+1}\right)\bigr|>0,}}
\]
which means that $\{\k_{k}\}_{k}\times\{f^{i}\}_{i}$ is a $F$-complete
set of measurements in $\Omega'$. Indeed, defining 
\[
\Omega'_{k}=\{x\in\Omega'\,:\,\bigl|F\left(\omega_{k},f^{1},\ldots f^{d+1}\right)\bigr|>0\},
\]
we have $\Omega'=\cup_{k}\Omega'_{k}$. This concludes the proof.
\end{proof}
The rest of this section is devoted to the proof of Proposition~\ref{prop:d+1 frequencies-general},
which is based on the structure of analytic varieties.

An \emph{analytic variety in $\Omega$} is the set of common zeros
of a finite collection of real analytic functions in $\Omega$, namely
$\{x\in\Omega:g_{1}(x)=\cdots=g_{N}(x)=0\}$, for some $g_{1},\dots,g_{N}\in C^{A}(\Omega;\C)$.
For $\k_{1},\dots,\k_{N}\in\A$ we shall consider the analytic variety
\[
Z(\k_{1},\dots,\k_{N})=\{x\in\Omega:\z_{\k_{1}}(x)=\cdots=\z_{\k_{N}}(x)=0\}=\bigcap_{i=1}^{N}Z(\k_{i}).
\]
Analytic varieties can be stratified into submanifolds of different
dimensions.
\begin{lem}[\cite{Whitney-1965}]
\label{lem:stratification}Let $X$ be an analytic variety in $\Omega$.
There exists a locally finite collection $\{A_{l}\}_{l}$ of pairwise
disjoint connected analytic submanifolds of $\Omega$ (satisfying
Whitney's conditions) such that
\[
X=\bigcup_{l}A_{l}.
\]

\end{lem}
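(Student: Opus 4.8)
The plan is to prove the stratification by inducting downward on dimension, first producing the coarse decomposition into regular and singular loci and then refining it until Whitney's conditions hold. First I would reduce to the standard real setting: writing each defining function as $g_{j}=\mathrm{Re}\,g_{j}+\ii\,\mathrm{Im}\,g_{j}$, the set $X=\{g_{1}=\cdots=g_{N}=0\}$ is the common zero locus of the $2N$ real-valued real analytic functions $\mathrm{Re}\,g_{j},\mathrm{Im}\,g_{j}$, so $X$ is a real analytic variety in $\Omega\subseteq\R^{d}$ in the usual sense, and I may work with real analytic functions and real analytic submanifolds throughout.

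The second step is the regular/singular decomposition. Each $x\in X$ has a well-defined local dimension, and I would let $\mathrm{Reg}(X)\subseteq X$ be the set of points near which $X$ is an embedded analytic submanifold; this set is open and dense in $X$, is itself a (possibly disconnected, mixed-dimensional) analytic submanifold of $\Omega$, and its complement $\mathrm{Sing}(X)=X\setminus\mathrm{Reg}(X)$ is a closed analytic subvariety of $\Omega$ with $\dim\mathrm{Sing}(X)<\dim X$. Taking the connected components of $\mathrm{Reg}(X)$ as provisional top strata and stratifying $\mathrm{Sing}(X)$ by downward induction on dimension, I obtain a locally finite partition of $X$ into connected analytic submanifolds $\{A_{l}\}_{l}$ obeying the frontier condition that $\overline{A_{l}}\setminus A_{l}$ is a union of strata of strictly smaller dimension; local finiteness follows because analytic varieties are locally connected with locally finitely many components.

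The heart of the argument is upgrading this partition to one satisfying Whitney's conditions (a) and (b). For an incident pair $(A_{l},A_{m})$ with $A_{m}\subset\overline{A_{l}}$ and $y\in A_{m}$, I would examine the limits of tangent planes $T_{x}A_{l}$ and of secant directions as $x\to y$ along $A_{l}$; condition (b) requires every secant limit to lie in the corresponding tangent-plane limit, and by Mather's implication (b) forces (a). The structural fact I would invoke is that the locus $B\subseteq A_{m}$ of points where condition (b) fails is contained in a proper analytic subvariety of $A_{m}$, hence has dimension strictly less than $\dim A_{m}$. This is exactly where analyticity, rather than mere smoothness, is used: the Gauss map $x\mapsto T_{x}A_{l}$ into the Grassmannian $\mathrm{Gr}_{k}(\R^{d})$ is analytic, the closure in $\Omega\times\mathrm{Gr}_{k}(\R^{d})$ of its graph together with the secant-line data is a (semi)analytic set, and a curve-selection together with a Tarski--Seidenberg type analysis cuts out the defect locus analytically. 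Establishing this containment is the main obstacle.

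Finally I would close the induction by refinement. Since $B$ is a lower-dimensional analytic subvariety of $A_{m}$, I replace $A_{m}$ by the connected components of $A_{m}\setminus B$ and fold $B$ into the lower-dimensional part of $X$, which is again an analytic variety and so may be re-stratified within the analytic category. Because each such removal strictly lowers the dimension at which defects can occur, after finitely many steps (bounded by $d$) no incident pair violates condition (b). The resulting refined family $\{A_{l}\}_{l}$ is then a locally finite collection of pairwise disjoint connected analytic submanifolds of $\Omega$ whose union is $X$ and which satisfies Whitney's conditions, as required.
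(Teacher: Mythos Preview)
The paper does not prove this lemma at all: it is stated with the citation \cite{Whitney-1965} and used as a black box. So there is no ``paper's own proof'' to compare against---the authors simply import Whitney's stratification theorem from the literature.

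Your sketch is a reasonable outline of the classical argument (regular/singular decomposition, induction on dimension, refining along the locus where condition (b) fails), and you correctly flag the hard step, namely that the Whitney-(b) defect locus lies in a proper analytic subvariety of the lower stratum. Two cautions if you intend to actually carry this out: first, for \emph{real} analytic varieties the fact that $\mathrm{Sing}(X)$ is again an analytic variety is more delicate than in the complex case (real analytic sets need not be coherent), so one typically works in the semianalytic or subanalytic category rather than appealing directly to the complex-analytic picture; second, the projection/elimination tool you want here is not Tarski--Seidenberg (which is for semialgebraic sets) but the {\L}ojasiewicz--Gabrielov theory for subanalytic sets. These are the ingredients Whitney and later authors use, and filling them in is substantial---which is precisely why the paper cites the result rather than proving it.
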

The decomposition $X=\cup_{l}A_{l}$ is called \emph{a Whitney stratification}
of $X$. With this in mind, we can define the dimension of an analytic
variety $X=\cup_{l}A_{l}$ by
\begin{equation}
\dim X:=\max_{l}\dim A_{l}.\label{eq:dimension}
\end{equation}

The main result leading to the proof of Proposition~\ref{prop:d+1 frequencies-general}
is the following
\begin{lem}
\label{lem:finite set}Under the hypotheses of Proposition~\ref{prop:d+1 frequencies-general},
let $\Omega''$ be a smooth domain such that $\Omega''\Subset\Omega$
and $X$ be an analytic variety in $\Omega$ such that $X\cap\Omega''\neq\emptyset$.
Then the set 
\[
\{\k\in\A:\dim(Z(\k)\cap X\cap\Omega'')=\dim(X\cap\Omega'')\}
\]
is finite.\end{lem}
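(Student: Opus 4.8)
The plan is to combine the stratification of analytic varieties (Lemma~\ref{lem:stratification}) with two rigidity facts: the zero set of a nonzero real-analytic function on a connected analytic manifold is negligible, and the zeros of a nonzero holomorphic function are isolated. Throughout write $m:=\dim(X\cap\Omega'')$; since $X\cap\Omega''\neq\emptyset$ we have $m\geq0$. First I would fix a Whitney stratification $X=\bigcup_l A_l$ from Lemma~\ref{lem:stratification}. Intersecting with the open set $\Omega''$ produces a stratification of $X\cap\Omega''$, so by \eqref{eq:dimension} one has $m=\max\{\dim A_l:A_l\cap\Omega''\neq\emptyset\}$; in particular no stratum has dimension $>m$. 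Since $\{A_l\}$ is locally finite and $\overline{\Omega''}$ is a compact subset of $\Omega$, only finitely many $A_l$ meet $\Omega''$. Let $A_1,\dots,A_r$ (with $r\geq1$) denote those of dimension exactly $m$, and set $S:=\bigcup\{A_l\cap\Omega'':\dim A_l<m\}$, so that $X\cap\Omega''=S\cup\bigcup_{j=1}^{r}(A_j\cap\Omega'')$. Being a countable union of submanifolds of dimension at most $m-1$, $S$ has vanishing $\mathcal{H}^{m}$ measure.

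The crux is the following equivalence: \emph{for $\k\in\A$, $\dim(Z(\k)\cap X\cap\Omega'')=m$ if and only if $\z_\k\equiv0$ on $A_j$ for some $j\in\{1,\dots,r\}$}. The ``if'' part is easy: if $\z_\k$ vanishes on $A_j$ then the $m$-dimensional manifold $A_j\cap\Omega''$ lies in $Z(\k)\cap X\cap\Omega''$, so the latter has dimension $\geq m$; the reverse inequality holds for free because $Z(\k)\cap X\cap\Omega''$ is an analytic subvariety of $X\cap\Omega''$ and an $m$-dimensional submanifold cannot be covered by countably many submanifolds of strictly smaller dimension. For the ``only if'' part, assume $\dim(Z(\k)\cap X\cap\Omega'')=m$ and pick an $m$-dimensional stratum $B$ of this variety; it is an analytic submanifold contained in $X\cap\Omega''=S\cup\bigcup_{j=1}^r(A_j\cap\Omega'')$, and since $\mathcal{H}^{m}(B\cap S)=0$ there must be some $j$ with $\mathcal{H}^{m}(B\cap A_j)>0$. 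Thus $\z_\k$ vanishes on a subset of positive $\mathcal{H}^{m}$ measure of the connected analytic manifold $A_j$; since a proper analytic subvariety of $A_j$ has dimension $\leq m-1$ and is therefore $\mathcal{H}^{m}$-negligible, this forces $\z_\k\equiv0$ on $A_j$.

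Granting the equivalence, the rest is soft. For each $j\in\{1,\dots,r\}$, fix a point $x_j\in A_j\cap\Omega''\subseteq\Omega$ and consider $g_j\colon D\to\C$, $g_j(\k):=\z_\k(x_j)$. By hypothesis $g_j$ is holomorphic and $g_j(0)=\z_0(x_j)\neq0$, so $g_j$ is not identically zero and its zeros are isolated in $D$; hence $\{\k\in\A:\z_\k(x_j)=0\}$, being a discrete subset of the compact set $\A$, is finite, and a fortiori so is $\{\k\in\A:\z_\k\equiv0\text{ on }A_j\}$. The equivalence then gives
\[
\{\k\in\A:\dim(Z(\k)\cap X\cap\Omega'')=m\}=\bigcup_{j=1}^{r}\{\k\in\A:\z_\k\equiv0\text{ on }A_j\},
\]
a finite union of finite sets, which is the assertion.

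I expect the only genuine obstacle to be the ``only if'' half of the equivalence, that is, converting the equality of dimensions into the identical vanishing of $\z_\k$ on a whole top-dimensional stratum: this uses both the $\mathcal{H}^{m}$-negligibility of the lower-dimensional strata and the analytic rigidity step (a positive-measure zero set on a connected analytic manifold is the whole manifold). A minor caveat is that the argument through $g_j$ implicitly uses that $0$ and $\A$ lie in the same connected component of $D$, which holds here since in the Helmholtz application $D=\C\setminus\Sigma$ is connected.
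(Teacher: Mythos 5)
Your argument is correct, and it reaches the conclusion by a genuinely different route than the paper, even though the underlying ingredients coincide (Whitney stratification, analytic rigidity on a connected stratum, holomorphy in $\k$ together with $\z_0\neq0$). The paper argues by contradiction: it assumes infinitely many bad frequencies, extracts an accumulating sequence $\k_n\to\k$ and points $x_n\to x$, stratifies $X$ only \emph{locally} near the limit point $x$, forces $\z_{\k_n}$ to vanish on a single local stratum $A_1$ via the containment theorem of Enciso--Peralta, and then applies the identity theorem to the holomorphic map $\mu\mapsto\z_\mu(x)$ at the accumulation point $\k$. You instead give a direct structural characterization: the bad set is exactly $\bigcup_{j=1}^r\{\k:\z_\k\equiv0\text{ on }A_j\}$ for the finitely many top-dimensional strata meeting $\Omega''$, each of which embeds into the zero set of the nonvanishing holomorphic function $\k\mapsto\z_\k(x_j)$ and is hence finite. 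Your measure-theoretic rigidity step ($\mathcal{H}^m$-negligibility of lower strata plus the positive-measure zero-set criterion for real-analytic functions) replaces the citation to Enciso--Peralta, and is arguably more self-contained, since you only need positive measure of $B\cap A_j$ rather than an open containment. What the paper's local, contradiction-based argument buys in exchange is that it never has to assert that restricting a stratification of $X$ to $\Omega''$ computes $\dim(X\cap\Omega'')$, nor that the dimension of an analytic variety is independent of the chosen stratification: it only compares dimensions inside one small neighbourhood $U$. Both proofs share the same implicit hypothesis that $0$ and $\A$ lie in one connected component of $D$ (needed for the identity theorem in the paper and for the discreteness of zeros of $g_j$ in your version), which you correctly flag and which holds in the Helmholtz application. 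The only points worth tightening in a final write-up are cosmetic: the sets $A_l\cap\Omega''$ need not be connected, so one should pass to connected components before invoking \eqref{eq:dimension}, and the ``if'' half of your equivalence (hence the displayed set equality) is not actually needed for finiteness---the inclusion of the bad set into $\bigcup_j\{\k:\z_\k\equiv0\text{ on }A_j\}$ suffices.
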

\begin{proof}
By contradiction, suppose that the set is infinite. Since $\A$ is
compact, there exist $\k_{n},\k\in\A$, $\k_{n}\to\k$ such that $\dim(Z(\k_{n})\cap X\cap\Omega'')=\dim(X\cap\Omega'')$
and $\k_{n}\neq\k$ for all $n\in\N$. 

Therefore, in view of \eqref{eq:dimension}, for each $n$ there exists
a non-empty connected analytic submanifolds $S_{n}$ such that
\begin{equation}
S_{n}\subseteq Z(\k_{n})\cap X\cap\Omega''\label{eq:first}
\end{equation}
and
\begin{equation}
\dim S_{n}=\dim(X\cap\Omega'').\label{eq:second}
\end{equation}
Choose an arbitrary $x_{n}\in S_{n}$ for all $n\in\N$. Up to a subsequence,
we have $x_{n}\to x$, for some $x\in\overline{\Omega''}$. By Lemma~\ref{lem:stratification}
applied to $X$, there exists an open neighborhood $U$ of $x$ in
$\Omega$ and a finite collection $\{A_{l}\}_{l}$ of analytic submanifolds
of $\Omega$ such that
\[
X\cap U=\cup_{l}A_{l}.
\]
Moreover, since $x_{n}\in S_{n}$ and $x_{n}\to x$, up to a subsequence
we have $S_{n}\cap U\neq\emptyset$ for all $n\in\N$. As $S_{n}\subseteq X$,
up to a subsequence (and relabeling the collection $A_{l}$) we have
for all $n$ 
\begin{equation}
S_{n}\cap U\subset A_{1}.\label{eq:third}
\end{equation}
Since by \eqref{eq:second}, $\dim(S_{n}\cap U)=\dim(X\cap U)$, and
$A_{1}\subset X\cap U$, this implies 
\begin{equation}
\dim(S_{n}\cap U)=\dim A_{1}.\label{eq:forth}
\end{equation}
In view of \eqref{eq:first} we have $\z_{\k_{n}}(y)=0$ for all $y\in S_{n}$.
Therefore, by \eqref{eq:third}, \eqref{eq:forth}, \cite[Theorem 1.2]{ENCISO-PERALTA-2013}
and $\z_{\k}\in C^{A}(\Omega;\C)$ we obtain $S_{n}\cap U=A_{1}$,
whence 
\[
A_{1}\subseteq Z(\k_{n}),\qquad n\in\N.
\]
As $x_{n}\in A_{1}$ for all $n$, we have $x\in\overline{A_{1}}$.
Thus, since $Z(\k_{n})$ is closed, we infer that $x\in Z(\k_{n})$
for all $n\in\N$, namely $\z_{\k_{n}}(x)=0$ for all $n\in\N$. Since
$\k\mapsto\z_{\k}(x)$ is holomorphic, this implies $\z_{0}(x)=0$,
which contradicts the assumptions.
\end{proof}
We are now in a position to prove Proposition~\ref{prop:d+1 frequencies-general}.
\begin{proof}[Proof of Proposition~\foreignlanguage{british}{\ref{prop:d+1 frequencies-general}}]
 Since the map $(\k_{1},\dots,\k_{d+1})\mapsto\min_{\overline{\Omega'}}(\left|\z_{\k_{1}}\right|+\cdots+|\z_{\k_{d+1}}|)$
is continuous, the set $G=\{(\k_{1},\dots,\k_{d+1})\in\A^{d+1}:\min_{\overline{\Omega'}}(\left|\z_{\k_{1}}\right|+\cdots+|\z_{\k_{d+1}}|)>0\}$
is open. It remains to show that $G$ is dense in $\A^{d+1}$.

Take $(\tilde{\k}_{1},\dots,\tilde{\k}_{d+1})\in\A^{d+1}$ and $\epsilon>0$.
Let $\Omega''$ be such that $\Omega'\Subset\Omega''\Subset\Omega$.
We equip $\A^{d+1}$ with the norm
\[
\left\Vert (\k_{1},\dots,\k_{d+1})\right\Vert =\max_{k}\left|\k_{k}\right|.
\]
We now want to construct an element $(\k_{1},\dots,\k_{d+1})\in G$
such that
\begin{equation}
\bigl\Vert(\k_{1},\dots,\k_{d+1})-(\tilde{\k}_{1},\dots,\tilde{\k}_{d+1})\bigr\Vert<\epsilon.\label{eq:close}
\end{equation}
If $\dim Z(\k_{1})\le d-1$, set $\k_{1}=\tilde{\k}_{1}$; obviously
we have $|\k_{1}-\tilde{\k}_{1}|<\epsilon$. If $\dim Z(\k_{1})=d$,
by Lemma~\ref{lem:finite set} we obtain that the set
\[
\{\k\in\A:\dim(Z(\k)\cap\Omega'')=d\}
\]
is finite. Therefore, we can choose $\k_{1}\in\A$ such that
\[
\dim(Z(\k_{1})\cap\Omega'')\le d-1
\]
and $|\k_{1}-\tilde{\k}_{1}|<\epsilon$. Suppose now that we have
constructed $\k_{1},\dots,\k_{k}$ such that $|\k_{j}-\tilde{\k}_{j}|<\epsilon$
for all $j=1,\dots,k$. Let us describe how to construct $\k_{k+1}$.
If $Z(\k_{1},\dots,\k_{k})\cap\Omega''=\emptyset$, then it is enough
to choose $\k_{k+1}=\tilde{\k}_{k+1}$. Otherwise, applying Lemma~\ref{lem:finite set}
with $X=Z(\k_{1},\dots,\k_{k})$, we obtain that the set
\[
\{\k\in\A:\dim(Z(\k)\cap Z(\k_{1},\dots,\k_{k})\cap\Omega'')=\dim(Z(\k_{1},\dots,\k_{k})\cap\Omega'')\}
\]
is finite. Therefore, we can choose $\k_{k+1}\in\A$ such that
\[
\dim(Z(\k_{1},\dots,\k_{k+1})\cap\Omega'')<\dim(Z(\k_{1},\dots,\k_{k})\cap\Omega'')
\]
and $|\k_{k+1}-\tilde{\k}_{k+1}|<\epsilon$. Therefore, as we have
$\dim(Z(\k_{1})\cap\Omega'')\le d-1$, we obtain $\dim(Z(\k_{1},\dots,\k_{d+1})\cap\Omega'')<0$,
namely $Z(\k_{1},\dots,\k_{d+1})\cap\Omega''=\emptyset$. In other
words, $(\k_{1},\dots,\k_{d+1})\in G$. By construction, \eqref{eq:close}
is satisfied. This concludes the proof.
\end{proof}

\section{\label{sec:Conclusions}Conclusions}

In this work we have showed that, under the assumption of real analytic
coefficients, almost any $d+1$ frequencies in a fixed range give
the required constraints, where $d$ is the dimension of the ambient
space. The proof is based on the structure of analytic varieties,
and so the hypothesis of real analytic coefficients is crucial. To
prove (or disprove) this result under weaker hypothesis on the coefficients
a different approach is required.

While this result seems optimal for an a priori fixed number of boundary
conditions and for a somewhat arbitrary constraint function $F$,
it could be that less than $d+1$ frequencies are required if more
boundary conditions are allowed (e.g., a set of $d$ frequencies and
$d\times(d+1)$ boundary conditions to choose from). 

\bibliographystyle{abbrvurl}
\bibliography{alberti-capdeboscq}

\end{document}